\title{A note on growth of hyperbolic groups}
\author{Motiejus Valiunas}
\address{Mathematical Sciences, University of Southampton, Southampton SO17 1BJ, United Kingdom}
\email{m.valiunas@soton.ac.uk}
\keywords{Hyperbolic groups, growth of groups}
\subjclass[2010]{20F67, 20F69}
\newcommand{\bb}{\mathfrak{B}}
\newcommand{\sss}{\mathfrak{S}}
\theoremstyle{plain}
\newtheorem{thm}{Theorem}
\newtheorem{lem}[thm]{Lemma}
\theoremstyle{definition}
\newtheorem*{ack}{Acknowledgement}
\begin{document}

\begin{abstract}
The following short note provides an alternative proof of a result of Coornaert \cite{coornaert}: namely, that given a non-elementary word-hyperbolic group $G$ with a finite generating set $X$, there exist constants $\lambda,D > 1$ such that \[ D^{-1}\lambda^n \leq |B_{G,X}(n)| \leq D \lambda^n \] for all $n \geq 0$, where $B_{G,X}(n)$ is the ball of radius $n$ in the Cayley graph $\Gamma(G,X)$.
\end{abstract}
\maketitle

Given a group $G$ with a finite generating set $X$ and an integer $n \geq 0$, we denote by $S_{G,X}(n) \subseteq G$ the \emph{sphere} of radius $n$ in $G$ with respect to $X$: that is, the set of elements $g \in G$ that can be represented by words of $n$ (but not $n-1$) letters in $X^{\pm 1}$. We denote $|S_{G,X}(n)|$ by $\sss_{G,X}(n)$. Similarly, we denote by $B_{G,X}(n) \subseteq G$ the \emph{ball} of radius $n$ in $G$ with respect to $X$ -- that is, $B_{G,X}(n) = \bigcup_{i=0}^n S_{G,X}(i)$ -- and we write $\bb_{G,X}(n)$ for $|B_{G,X}(n)|$. We write $S(n)$, $\sss(n)$, $B(n)$ and $\bb(n)$ for $S_{G,X}(n)$, $\sss_{G,X}(n)$, $B_{G,X}(n)$ and $\bb_{G,X}(n)$ (respectively) if $G$ and $X$ are clear.

Here we give a short alternative proof of the following result.
\begin{thm}[Coornaert {\cite[Th\'eor\`eme 7.2]{coornaert}}] \label{t:coornaert}
Let $G$ be a non-elementary word-hyperbolic group with a finite generating set $X$. Then there exist constants $\lambda,D > 1$ such that \[ D^{-1}\lambda^n \leq \bb_{G,X}(n) \leq D \lambda^n \] for all $n \geq 0$.
\end{thm}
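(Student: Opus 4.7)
The plan is to derive matching exponential lower and upper bounds for $\bb(n)$, with common growth rate $\lambda := \lim_n \bb(n)^{1/n}$. The lower bound is essentially formal: from the inclusion $B(n+m) \subseteq B(n)B(m)$ one reads off submultiplicativity $\bb(n+m) \leq \bb(n)\bb(m)$, and Fekete's lemma then gives $\lambda = \inf_n \bb(n)^{1/n}$ together with $\bb(n) \geq \lambda^n$ for every $n$. Non-elementarity of $G$ ensures $\lambda > 1$: a standard ping-pong argument on the Gromov boundary $\partial G$ produces two independent loxodromic elements of $G$ generating a free subgroup of rank $2$, which forces exponential growth.

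The real work is the matching upper bound. My plan is to deduce it from a converse super-multiplicative inequality
\[
\bb(n)\,\bb(m) \leq C\,\bb(n+m+K)
\]
for all $n,m \geq 0$ and some constants $C$ and $K$ depending only on $G$ and $X$. A short Fekete-type iteration, adapted to super-multiplicative sequences with additive shift $K$, then converts this into the desired bound $\bb(n) \leq D\lambda^n$.

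To obtain the super-multiplicative inequality, I would exploit non-elementarity: fix two independent loxodromic elements $\xi_1, \xi_2 \in G$ and a sufficiently large integer $K_0$, and set $w := \xi_1^{K_0}\xi_2^{K_0} \in G$. The multiplication-with-buffer map
\[
\mu\colon B(n) \times B(m) \to B(n+m+|w|), \qquad (g, h) \mapsto g\,w\,h,
\]
takes values in $B(n+m+K)$ for $K := |w|$. The inequality would follow at once, as $\bb(n)\bb(m) = |B(n)\times B(m)| \leq C\,|\mu(B(n)\times B(m))| \leq C\,\bb(n+m+K)$, provided $\mu$ is shown to have uniformly bounded fibres.

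The bounded-fibre claim is the main obstacle and is the step where hyperbolicity genuinely enters. Suppose $gwh = g'wh'$. A geodesic from $1$ to this common value must, by stability of quasi-geodesics, fellow-travel both translates $[g,gw]$ and $[g',g'w]$ of the (quasi-)geodesic $[1,w]$. The crucial geometric fact is that $w$ is \emph{rigid}: since $\xi_1$ and $\xi_2$ are independent, their quasi-axes intersect in a bounded region, so for $K_0$ large enough the ``corner'' at $g\xi_1^{K_0}$---where the $\xi_1$-axis bends into the $\xi_2$-axis---is a uniquely identifiable geometric feature on any geodesic containing a translate of $[1,w]$. (With a single loxodromic buffer $\xi^{K_0}$ this would fail: shifts $(g,h) \mapsto (g\xi^j, \xi^{-j}h)$ produce unbounded fibres, which is precisely why two independent axes are needed.) Matching the two corners arising from the two decompositions of $gwh$ then forces $|g^{-1}g'|$, and hence $|h(h')^{-1}|$, to be bounded by a constant depending only on $\delta$, $\xi_1$ and $\xi_2$, so the fibres of $\mu$ have cardinality $O(1)$.
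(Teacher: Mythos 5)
Your lower bound and the Fekete-type iteration that would deduce $\bb(n) \leq D\lambda^n$ from a super-multiplicative inequality $\bb(n)\bb(m) \leq C\,\bb(n+m+K)$ are both sound, and this route — which bypasses Cannon's rationality theorem, the engine of the paper's argument — is viable in principle. The gap is the bounded-fibre claim for $\mu(g,h)=gwh$, which is false for \emph{any} single fixed buffer $w$, however rigid. Consider the fibre of $\mu$ over $1_G$: it contains every pair of the form $(g,\,w^{-1}g^{-1})$, and since $|w^{-1}g^{-1}| \leq |g|+|w|$, every $g \in B(\min(n,m)-|w|)$ yields such a pair inside $B(n)\times B(m)$. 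Hence $|\mu^{-1}(1_G)| \geq \bb(\min(n,m)-|w|)$, which is unbounded as $n,m \to \infty$. The geometric argument you sketch tacitly assumes that a geodesic from $1_G$ to $gwh$ fellow-travels the translate $[g,gw]$; but stability of quasi-geodesics gives this only when the concatenation $[1_G,g]\cdot[g,gw]\cdot[gw,gwh]$ is itself a uniform quasi-geodesic, and nothing prevents $h$ from beginning with $w^{-1}$ and then retracing $[1_G,g]$ backwards. When that happens $gwh$ lands near the identity and no translate of $[1_G,w]$ is visible on the geodesic $[1_G,gwh]$ at all, so the rigidity of $w$ never gets a chance to act. (Your parenthetical remark correctly diagnoses the failure mode for a single loxodromic buffer, but the same shift-and-cancel phenomenon defeats the two-axis buffer as well.)

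The standard repair is genuinely more work: one takes a finite set $F$ of pairwise independent, high-power loxodromic buffers and proves an \emph{extension lemma} — for every pair $(g,h)$ \emph{some} $f \in F$ makes $[1_G,g]\cdot[g,gf]\cdot[gf,gfh]$ a uniform quasi-geodesic — and only then does rigidity of the chosen buffer bound the fibres of $(g,h)\mapsto g\,f(g,h)\,h$. As written, your upper bound does not go through. For comparison, the paper sidesteps this issue entirely: it obtains $\sss(n)\asymp n^\alpha\lambda^n$ from rationality of the growth series, and the only place $\delta$-thinness enters is the \emph{sub}-multiplicative-type estimate of Lemma \ref{l:me}, where the fibre bound $\bb\bigl(\delta+\lceil\tfrac{n+m-\ell}{2}\rceil\bigr)$ is deliberately allowed to grow as the amount of cancellation $n+m-\ell$ grows — precisely the phenomenon your fixed-buffer map cannot absorb — after which a summation argument forces $\alpha=0$.
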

The proof of this theorem given in \cite{coornaert} is based on the theory of Patterson-Sullivan measures and dynamical systems; using similar methods, Yang has shown an analogous result for relatively hyperbolic groups \cite[Theorem 1.9]{yangrh}. Here we give a Patterson-Sullivan measure-free proof. It is worth noting that an analogous result also holds when $G$ is a right-angled Artin/Coxeter group that does not split as a direct product and $X$ is the standard generating set \cite[Theorem 2.2]{gtt}.

We define the (\emph{spherical}) \emph{growth function} of $G$ with respect to $X$ to be the formal power series $s_{G,X}(t) = \sum_{n=0}^\infty \sss_{G,X}(n)t^n$. Similarly, we may define the \emph{volume growth function} as $b_{G,X}(t) = \sum_{n=0}^\infty \bb_{G,X}(n)t^n$; note that we have $s_{G,X}(t) = (1-t)b_{G,X}(t)$. We say that $G$ has \emph{rational growth} with respect to $X$ if $s_{G,X}(t)$ (equivalently, $b_{G,X}(t)$) is a rational function: that is, a ratio of two polynomials. A classical result due to Cannon says that this is always the case for hyperbolic groups.
\begin{thm}[Cannon {\cite[Theorem 7]{cannon}}] \label{t:gromov}
A word-hyperbolic group $G$ has rational growth with respect to any finite generating set.
\end{thm}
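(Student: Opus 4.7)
The plan is to follow Cannon's original approach based on cone types. For $g \in G$, write $|g|$ for the word length of $g$ with respect to $X$, and define the \emph{cone type} of $g$ to be the set $C(g) = \{h \in G : |gh| = |g| + |h|\}$, i.e.\ the set of elements $h$ such that some geodesic from $1$ to $gh$ passes through $g$.

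The first, foundational step is to show that $G$ has only finitely many cone types. The key input is the $\delta$-thin triangle property: one shows that whether $h \in C(g)$ depends only on the labelled ball of radius $K = K(\delta)$ around $g$ (translated back to $1$), because any ``shortcut'' witnessing $h \notin C(g)$ must, by thin triangles, fellow-travel a piecewise-geodesic path through $g$ at distance $\leq 2\delta$, and hence be detectable within a bounded neighbourhood of $g$. Since only finitely many such labelled $K$-balls exist, the cone types $T_1,\dots,T_m$ are finite in number.

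I would then construct a finite labelled digraph $\Gamma$ whose vertices are the cone types, with an $x$-labelled edge from $T_i$ to $T_j$ whenever $x \in T_i$ and $C(gx) = T_j$ for any (hence every) $g$ with $C(g) = T_i$; this is well-defined by the previous step, as the $K$-ball around $gx$ is determined by the $(K+1)$-ball around $g$ together with $x$. Walks in $\Gamma$ starting at $C(1) = G$ are in bijection with geodesic words in $(X^{\pm 1})^{*}$, so $\Gamma$ may be regarded as a finite automaton recognising the language of geodesic words, whose length-generating function is rational by the transfer matrix method.

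The main obstacle is that counting walks in $\Gamma$ yields the number of geodesic \emph{words} of length $n$, whereas $\bb_{G,X}(n)$ counts \emph{elements}. To close this gap, I would show that the shortlex language $L_{\mathrm{sl}} \subseteq (X^{\pm 1})^{*}$---the unique lex-smallest geodesic representative of each element, for a fixed total order on $X^{\pm 1}$---is regular. A geodesic $w$ is shortlex iff no lex-smaller geodesic $w'$ of the same length represents the same element; by thin triangles applied to $w$ and $w'$, any such competitor synchronously fellow-travels $w$ at distance $\leq 2\delta$, so lex-minimality is detectable by a product automaton whose states record the cone type at $w(t)$ together with the bounded ``difference element'' $w(t)^{-1}w'(t) \in B_{G,X}(2\delta)$. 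Since $L_{\mathrm{sl}}$ is then regular and in bijection with $G$, its length-generating function equals $b_{G,X}(t)$, and rationality follows.
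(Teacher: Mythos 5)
The paper does not actually prove this theorem: it quotes Cannon's result as a black box (just as it quotes Theorem \ref{t:me}), so there is no in-paper argument to compare yours against. Your sketch is essentially Cannon's original proof, in the form it appears throughout the automatic-groups literature, and the architecture --- finitely many cone types, the cone-type automaton recognising the geodesic language, and a regular shortlex cross-section to pass from geodesic words to group elements --- is the right one and is correct in outline.

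One step is wrong as literally stated, though the fix is standard. The labelled ball of radius $K$ around $g$, ``translated back to $1_G$,'' is just the labelled ball of radius $K$ around $1_G$: by homogeneity of the Cayley graph it is the same for every $g$, so it cannot determine the cone type (otherwise all elements would have the same cone type). What determines $C(g)$ is the restriction to $B(K)$ of the function $u \mapsto |gu| - |g|$ (the ``$K$-level'' of $g$): detecting a shortcut that witnesses $h \notin C(g)$ requires knowing which elements near $g$ are closer to $1_G$ than $g$ is, not the local graph structure alone, and the actual proof that the level determines the cone type is an induction on $|h|$. Since this function takes boundedly many values on a finite set, finiteness of cone types still follows and the rest of your argument goes through; note also that the transition $T_i \to T_j$ labelled by $x$ is well defined directly from the identity $C(gx) = \{ h : xh \in C(g),\ |xh| = |h| + 1 \}$, with no appeal to balls needed. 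Finally, a trivial slip: the length generating function of $L_{\mathrm{sl}}$ is $s_{G,X}(t)$, not $b_{G,X}(t)$, since it counts elements of each exact word length; as $s_{G,X}(t) = (1-t)\,b_{G,X}(t)$, rationality of either gives the other.
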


Our proof relies on the following theorem of the author.
\begin{thm}[Valiunas {\cite[Theorem 1]{me1}}] \label{t:me}
Let $G$ be an infinite group with a finite generating set $X$, and suppose that $G$ has rational growth with respect to $X$. Then there exist constants $\alpha \in \mathbb{Z}_{\geq 0}$, $\lambda \in [1,\infty)$ and $D > C > 0$ such that \[ C n^\alpha \lambda^n \leq \sss_{G,X}(n) \leq D n^\alpha \lambda^n \] for all $n \geq 1$.
\end{thm}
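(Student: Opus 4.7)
My plan is to analyse the partial fraction decomposition of the rational growth series $s_{G,X}(t)$. Writing $s_{G,X}(t) = p(t)/q(t)$ in lowest terms and expanding, one extracts
\[ \sss(n) = \sum_j \sum_{k=1}^{m_j} c_{j,k} \binom{n+k-1}{k-1} \mu_j^{-n} \qquad \text{for $n$ sufficiently large,} \]
where the $\mu_j \in \mathbb{C} \setminus \{0\}$ are the distinct poles of $s_{G,X}(t)$ with multiplicities $m_j$. Set $R := \min_j |\mu_j|$ and $\lambda := 1/R$. The infiniteness of $G$ gives $\sss(n) \geq 1$ for every $n$ (otherwise the ball sizes $\bb(n)$ would stabilise), forcing $\lambda \geq 1$; the elementary bound $\sss(n) \leq (2|X|)^n$ gives $\lambda \leq 2|X|$. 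By Pringsheim's theorem applied to the non-negative series $s_{G,X}(t)$, the radius $R$ is itself a pole; write $\alpha + 1 \geq 1$ for its multiplicity.

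The upper bound $\sss(n) \leq D n^\alpha \lambda^n$ is routine. Contributions from poles with $|\mu_j| > R$ are $O(\rho^n n^{m_j - 1})$ for some $\rho < \lambda$, while those from $|\mu_j| = R$ sum to $O(n^{M-1} \lambda^n)$, where $M$ is the largest multiplicity of poles on the critical circle. To conclude $M = \alpha + 1$, I would use a Perron--Frobenius-style argument: if $M > \alpha + 1$, the leading coefficient of $\sss(n)$ would be a non-zero real zero-mean almost-periodic function $\sum c_{j,M} (\mu_j/R)^{-n}$ summed over non-$R$ poles of maximum multiplicity (there is no contribution from $R$ itself, since $\alpha + 1 < M$), which must take negative values on a set of positive density in $\mathbb{Z}$, contradicting $\sss(n) \geq 0$.

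The lower bound is more delicate. After isolating the terms of highest order on the critical circle,
\[ \sss(n) = \frac{n^\alpha \lambda^n}{\alpha!}\bigl(c_R + g(n)\bigr) + O(n^{\alpha - 1}\lambda^n), \]
with $c_R > 0$ (the positive residue of Pringsheim's pole at $R$) and $g$ a real almost-periodic function of zero Ces\`aro mean encoding the contributions from non-$R$ poles of multiplicity $\alpha + 1$ on $|t| = R$. One first shows $c_R + g(n) \geq 0$ eventually, since a strictly negative leading term cannot be compensated by lower-order terms of size $n^{\alpha - 1}\lambda^n$ under $\sss(n) \geq 0$. To upgrade this to a uniform positive lower bound $c_R + g(n) \geq c > 0$, I would invoke the ``bounded sphere ratios'' $(2|X|)^{-1} \leq \sss(n+1)/\sss(n) \leq 2|X|$ (a direct consequence of the Cayley graph structure of $G$): were there a subsequence $n_k$ along which $c_R + g(n_k) \to 0$, bounded ratios would force $\sss(n_k - j) = o(n_k^\alpha \lambda^{n_k})$ on windows $j \in [0, J]$ for any fixed $J$, which together with a Ces\`aro-type lower bound on $\bb(n_k) = \sum_{i \leq n_k} \sss(i)$ would yield a contradiction.

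The main obstacle is precisely this uniform positivity of $c_R + g$: it encodes the potential destructive interference among dominant poles on the critical circle, and resolving it is the technical heart of the proof. Once established, the bound $\sss(n) \geq C n^\alpha \lambda^n$ holds for all large $n$ by the asymptotic expansion, and for small $n$ follows from $\sss(n) \geq 1$ after shrinking $C$.
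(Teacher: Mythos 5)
Note first that this paper does not prove Theorem~\ref{t:me} at all: it is imported verbatim from \cite{me1}, so your proposal has to stand on its own. Your strategy for the upper bound and for identifying $\alpha$ and $\lambda$ (partial fractions, Pringsheim, and ruling out poles of multiplicity $>\alpha+1$ on the critical circle via the zero-mean/nonnegativity argument) is correct and standard, and you rightly identify the one genuinely hard point, the uniform lower bound $c_R+g(n)\geq c>0$. But you do not close that gap, and the tools you propose for closing it do not work. First, the inequality $\sss(n)\leq 2|X|\,\sss(n+1)$ is \emph{not} ``a direct consequence of the Cayley graph structure'': it would require every element of $S(n)$ to have a neighbour in $S(n+1)$, which fails at dead-end elements (and, e.g., lamplighter groups have dead ends of unbounded depth), so only the direction $\sss(n+1)\leq 2|X|\,\sss(n)$ comes for free. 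Second, even granting bounded ratios, when $\lambda>1$ the comparison with $\bb(n_k)=\sum_{i\leq n_k}\sss(i)$ involves window sums weighted by $\lambda^{-j}$, and exponentially weighted averages of the almost-periodic part $g$ do \emph{not} converge to its (zero) mean, so the advertised ``Ces\`aro-type'' contradiction does not materialise as stated. The gap is not cosmetic: the sequence $a_n=1+\cos(n\theta)$ with $\theta/\pi$ irrational is a nonnegative rational sequence (with $\alpha=0$, $\lambda=1$) admitting no constant $C>0$ with $a_n\geq C$, so some group-theoretic input beyond nonnegativity and rationality is indispensable precisely at this step, and you have not supplied a valid one.

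The missing ingredient is the submultiplicativity $\sss(m+n)\leq\sss(m)\,\sss(n)$, which follows from $S(m+n)\subseteq S(m)S(n)$ (split a geodesic representative after $m$ letters). This propagates smallness \emph{forward} with the correct (unweighted) normalisation: if $\sss(n_k)=o\bigl(n_k^\alpha\lambda^{n_k}\bigr)$ along a subsequence, then for each fixed $j$ one gets $\sss(n_k+j)\leq\sss(n_k)\sss(j)=o\bigl(n_k^\alpha\lambda^{n_k}\bigr)\cdot O\bigl(j^\alpha\lambda^{j}\bigr)$, so for any fixed window length $J$ the plain average of $f(n):=\sss(n)n^{-\alpha}\lambda^{-n}$ over $[n_k,n_k+J]$ tends to $0$ as $k\to\infty$; on the other hand, since $g$ is a trigonometric polynomial with no constant term, the averages of $f$ over windows of length $J$ converge to $c_R/\alpha!>0$ uniformly in the starting point as $J\to\infty$. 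Taking $J$ large and then $k$ large yields the contradiction and hence the uniform positivity you need; this kind of group-theoretic input is what \cite{me1} relies on. With that substitution your outline becomes a proof; as written, the central step is both unproved and supported by a false auxiliary claim.
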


For the rest of the note, fix a non-elementary word-hyperbolic group $G$ and a finite generating set $X$ for $G$. Let $\delta \in \mathbb{Z}_{\geq 1}$ be a constant such that triangles in the Cayley graph $\Gamma(G,X)$ are $\delta$-thin, and let $d$ be the combinatorial metric on $\Gamma(G,X)$. We use an auxiliary lemma.
\begin{lem} \label{l:me}
For all $n,m \geq 0$, we have $\sss(n) \sss(m) \leq \sum_{\ell=0}^{n+m} \sss(\ell) \bb\left(\delta+\left\lceil\frac{n+m-\ell}{2}\right\rceil\right)$.
\end{lem}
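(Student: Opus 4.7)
The plan is to bin the pairs in $S(n) \times S(m)$ by the length $\ell = |gh|$ of their product and, for each $\ell \in \{0, 1, \ldots, n+m\}$, produce an injection from $T_\ell := \{(g, h) \in S(n) \times S(m) : gh \in S(\ell)\}$ into $S(\ell) \times B(\delta + \lceil(n+m-\ell)/2\rceil)$. Since $\sum_\ell |T_\ell| = \sss(n)\sss(m)$, the claimed inequality then follows by summing the cardinality bounds.

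To construct the injection I would fix once and for all, for each $k \in G$, a geodesic $\alpha_k : \{0, 1, \ldots, |k|\} \to G$ from $1$ to $k$ in $\Gamma(G, X)$, parameterised so that $\alpha_k(i)$ is a vertex at distance $i$ from $1$. Given $(g, h) \in T_\ell$ with $k := gh$, the triangle inequality gives $|n - m| \leq \ell \leq n + m$, so $a' := \lfloor (n + \ell - m)/2 \rfloor$ lies in $\{0, 1, \ldots, \ell\}$; I set $z := \alpha_k(a')$, $w := z^{-1}g$, and send $(g, h) \mapsto (k, w)$. Injectivity for fixed $n, m$ is automatic: from $(k, w)$ one reads off $\ell = |k|$, hence $a'$ and $z$, recovers $g = zw$, and finally $h = g^{-1} k$.

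The substantive step is to show $d(g, z) \leq \delta + b'$, where $b' := \lceil (n+m-\ell)/2 \rceil = n - a'$. For this I would consider the geodesic triangle on $1, g, k$ formed by $\alpha_k$, any geodesic $\gamma_g$ from $1$ to $g$, and a geodesic from $g$ to $k$. The Gromov product $(g, k)_1 = (n + \ell - m)/2$ is at least $a'$, so a standard fellow-travel consequence of $\delta$-thinness applied to the two sides $\gamma_g$ and $\alpha_k$ emanating from $1$ yields $d(\gamma_g(a'), \alpha_k(a')) \leq \delta$; combined with the exact identity $d(\gamma_g(a'), g) = n - a' = b'$, the triangle inequality gives $d(g, z) \leq \delta + b'$, so $w \in B(\delta + b')$ as required.

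The main obstacle is justifying the fellow-travel bound $d(\gamma_g(a'), \alpha_k(a')) \leq \delta$ with \emph{precisely} the constant $\delta$ given in the setup, since the true Gromov product may be a half-integer and the slim-triangle condition in its most naive form only supplies this bound up to a multiplicative constant. I would handle this either via the Rips/inscribed-tripod formulation of $\delta$-thinness (placing the tripod points at arc-parameter $(g, k)_1$ on $\gamma_g$ and $\alpha_k$ within $\delta$ of each other) combined with the fact that $a' \leq (g, k)_1$ lies on the fellow-travelling portion, or by a short direct vertex-level argument in the Cayley graph; the ceiling in the definition of $b'$ absorbs any half-integer rounding that appears.
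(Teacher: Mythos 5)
Your proof is correct and follows essentially the same route as the paper: partition $S(n)\times S(m)$ by the length $\ell$ of the product, and use $\delta$-thinness of the triangle on $1$, the first factor, and the product to place the first factor within $\delta+\lceil(n+m-\ell)/2\rceil$ of a reference point depending only on the product. The paper's version takes a point $p_g$ at (possibly half-integer) distance $\frac{\ell+n-m}{2}$ along a geodesic to the product and a vertex $c$ within $\frac12$ of it, which plays exactly the role of your $z=\alpha_k(a')$ with $a'=\lfloor\frac{n+\ell-m}{2}\rfloor$; since the paper's standing hypothesis is precisely the thin-triangles (inscribed-tripod) condition, the fellow-travelling bound by exactly $\delta$ that you flag as the main obstacle holds as stated.
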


\begin{proof}
Let $n,m \geq 0$. The map
\begin{align*}
\mu: S(n) \times S(m) &\to B(n+m), \\ (h,k) &\mapsto hk,
\end{align*}
allows us to define a partition
\[
S(n) \times S(m) = \bigsqcup_{\ell=|n-m|}^{n+m} \mu^{-1}(S(\ell)).
\]
It is therefore enough to show that $|\mu^{-1}(g)| \leq \bb\left(\delta+\left\lceil\frac{n+m-\ell}{2}\right\rceil\right)$ for all $\ell$ and all $g \in S(\ell)$.

Let $\ell \in \{ |n-m|,\ldots,n+m \}$ and let $g \in S(\ell)$. Let $\gamma_g$ be a geodesic in $\Gamma(G,X)$ joining $1_G$ and $g$, and let $p_g$ be the point on $\gamma_g$ such that $d(p_g,1_G) = \frac{\ell+n-m}{2}$. Let $c \in G$ be a vertex of $\Gamma(G,X)$ such that $d(c,p_g) \leq \frac{1}{2}$, and note that $c=p_g$ if $n+m-\ell$ is even. Let $N = \delta+\left\lceil\frac{n+m-\ell}{2}\right\rceil$.

\begin{center}
\begin{tikzpicture}
\draw [very thick] (0,0) node [below,blue] {$1_G$} arc (120:90:4) node [below,blue] {$p_g$} arc (90:80:6) node [below] {\Large $\gamma_g$} arc (80:60:6) node [right,blue] {$g$};
\draw [very thick] (0,0) arc (-60:-35:4) node [left] {\Large $\gamma_h$} arc (-35:-30:4) node [above,blue] {$p_h\ \ \ $} arc (-30:0:3) node [above,blue] {$h$};
\draw [very thick] plot [smooth] coordinates { (1.8660,2.9641) (2,2) (2.5,1.3) (4,0.4) (5,-0.2679) };
\draw [thick,dashed] plot [smooth] coordinates { (2,0.5359) (1.75,1.1) (1.4641,1.4641) };
\draw [thick,dashed] plot [smooth] coordinates { (2,0.5359) (2.25,1) (2.5,1.3) };
\draw [thick,dashed] plot [smooth] coordinates { (2.5,1.3) (2,1.3) (1.4641,1.4641) };
\node at (3.2,1.2) {\Large $\gamma_k$};
\end{tikzpicture}
\end{center}

Let $(h,k) \in \mu^{-1}(g)$, so that $h \in S(n)$ and $k = h^{-1}g \in S(m)$. Let $\gamma_h$, $\gamma_k$ be geodesics in $\Gamma(G,X)$ joining $1_G$ and $h$, $h$ and $g$, respectively, and let $\Delta$ be the geodesic triangle with edges $\gamma_g$, $\gamma_h$ and $\gamma_k$. Let $p_h$ be the point on $\gamma_h$ such that $d(p_h,1_G) = \frac{\ell+n-m}{2}$. Since $\Delta$ is $\delta$-thin, we have $d(p_g,p_h) \leq \delta$. By construction, $d(p_h,h) = \frac{n+m-\ell}{2}$, and so it follows by the triangle inequality that $d(1_G,c^{-1}h) = d(c,h) \leq N$. In particular,
\[
|\mu^{-1}(g)| = |\{ h \in S(n) \mid h^{-1}g \in S(m) \}| \leq |\{ h \in S(n) \mid c^{-1}h \in B(N) \}| \leq |cB(N)| = \bb(N),
\]
as required.
\end{proof}

\begin{proof}[Proof of Theorem \ref{t:coornaert}]
By Theorem \ref{t:gromov}, we may apply Theorem \ref{t:me} to $G$ and $X$. Let $\lambda$, $\alpha$, $C$ and $D$ be as given by Theorem \ref{t:me}. Since $G$ is non-elementary, it has exponential growth and therefore $\lambda > 1$; without loss of generality, assume furthermore that $D \geq 1$. It then follows from an easy computation that
\begin{equation} \label{e:balls}
C n^\alpha \lambda^n < \bb(n) < \frac{D\lambda}{\lambda-1} n^\alpha \lambda^n
\end{equation}
for all $n \geq 1$. It is therefore enough to show that $\alpha = 0$.

Pick an integer $n \geq 1$. It follows from Theorem \ref{t:me}, Lemma \ref{l:me} and \eqref{e:balls} that
\[
C^2 n^{2\alpha} \lambda^{2n} \leq \sss(n)^2 \leq \sum_{\ell=0}^{2n} \sss(\ell) \bb(\delta+\lceil n-\ell/2 \rceil) < \frac{D^2\lambda}{\lambda-1} \sum_{\ell=0}^{2n} \ell^\alpha (\delta+\lceil n-\ell/2 \rceil)^\alpha \lambda^{\delta+n+\lceil \ell/2 \rceil}.
\]
We may rearrange this to obtain
\begin{equation} \label{e:0}
\frac{C^2 (\lambda-1)}{2^\alpha D^2 \lambda} < \sum_{\ell=0}^{2n} \underbrace{\left( \frac{\ell}{2n} \right)^\alpha}_{\leq 1} \underbrace{\left( \frac{\delta+\lceil n-\ell/2 \rceil}{n} \right)^\alpha}_{\leq (1+\delta/n)^\alpha} \lambda^{\delta+\lceil \ell/2 \rceil-n}.
\end{equation}

Now let $\varepsilon: \mathbb{Z}_{\geq 1} \to \mathbb{Z}_{\geq 2\delta}$ be a function such that $n\left(1+\frac{\delta}{n}\right)^\alpha\lambda^{-\varepsilon(n)} \to 0$ and $\frac{\varepsilon(n)^2}{n} \to 0$ as $n \to \infty$: for instance, $\varepsilon(n) = 2\delta+\lfloor \sqrt[3]{n} \rfloor$. We may split the right hand side of \eqref{e:0} into terms with $\ell \leq 2n-2\varepsilon(n)$ and terms with $\ell > 2n-2\varepsilon(n)$. In particular, we get
\begin{equation} \label{e:1}
\begin{aligned}
\sum_{\ell=0}^{2n-2\varepsilon(n)} \left( \frac{\ell}{2n} \right)^\alpha \left( \frac{\delta+\lceil n-\ell/2 \rceil}{n} \right)^\alpha \lambda^{\delta+\lceil \ell/2 \rceil-n} &\leq \sum_{\ell=0}^{2n-2\varepsilon(n)} \left(1+\frac{\delta}{n}\right)^\alpha \lambda^{\delta-\varepsilon(n)} \\ &< 2n \left(1+\frac{\delta}{n}\right)^\alpha \lambda^{\delta-\varepsilon(n)}
\end{aligned}
\end{equation}
and
\begin{equation} \label{e:2}
\begin{aligned}
\sum_{\ell=2n-2\varepsilon(n)+1}^{2n} \left( \frac{\ell}{2n} \right)^\alpha \left( \frac{\delta+\lceil n-\ell/2 \rceil}{n} \right)^\alpha \lambda^{\delta+\lceil \ell/2 \rceil-n} &\leq \sum_{\ell=2n-2\varepsilon(n)+1}^{2n} \left( \frac{\delta+\varepsilon(n)}{n} \right)^\alpha \lambda^{\delta} \\ &= 2\lambda^\delta \varepsilon(n) \left( \frac{\delta+\varepsilon(n)}{n} \right)^\alpha.
\end{aligned}
\end{equation}

Note that the left hand side of \eqref{e:0} is a strictly positive constant, and the right hand side of \eqref{e:1} tends to zero as $n \to \infty$ by the choice of $\varepsilon$. Furthermore, by the choice of $\varepsilon$, if $\alpha \geq 1$ then the right hand side of \eqref{e:2} tends to zero as $n \to \infty$. But by \eqref{e:0} this cannot happen, and so $\alpha = 0$, as required.
\end{proof}

\begin{ack} I would like to thank Laura Ciobanu for a conversation that inspired this note and for a subsequent discussion. \end{ack}

\bibliographystyle{amsalpha}
\bibliography{../../all}

\end{document}